\numberwithin{equation}{section}
\newtheorem{Theorem}{Theorem}[section]
\newtheorem{Proposition}[Theorem]{Proposition}
\def\iso{\cong}
\def\sub{\subseteq}
\def\C{\mathbb C}
\def\Z{\mathbb Z}
\def\cE{\mathcal E}
\def\cI{\mathcal I}
\def\cO{\mathcal O}
\def\cS{\mathcal S}
\def\cX{\mathcal X}
\def\g{{\mathfrak g}}
\def\l{\mathfrak l}
\def\m{\mathfrak m}
\def\n{\mathfrak n}
\def\q{\mathfrak q}
\def\t{\mathfrak t}
\def\sl{\mathfrak{sl}}
\def\u{\mathfrak u}
\def\z{\mathfrak z}
\def\bp{\overline{\mathfrak p}}
\def\isoto{\overset{\sim}{\longrightarrow}}
\def\modfd{\!\operatorname{-mod_{\operatorname{fd}}}}
\def\rA{\mathrm{A}}
\def\rB{\mathrm{B}}
\def\rC{\mathrm{C}}
\def\rD{\mathrm{D}}
\def\rE{\mathrm{E}}
\def\rF{\mathrm{F}}
\def\rG{\mathrm{G}}
\def\ba{\text{\boldmath$a$}}
\def\bb{\text{\boldmath$b$}}
\def\ad{\operatorname{ad}}
\def\ab{\operatorname{ab}}
\def\Ann{\operatorname{Ann}}
\def\Aut{\operatorname{Aut}}
\def\Comp{\operatorname{Comp}}
\def\Lie{\operatorname{Lie}}
\def\pt{\operatorname{pt}}
\def\wt{\operatorname{wt}}
\title
{On the variety of 1-dimensional representations of finite W-algebras in low rank}
\author{Jonathan Brown and Simon M.~Goodwin}
\address{ Department of Mathematics, Computer Science and Statistics,
                      State University of New York,
                      Oneonta, NY 13820, USA}
\email{Jonathan.Brown@oneonta.edu}
\address{School of Mathematics,
University of Birmingham,
Birmingham, B15 2TT,
UK}
\email{s.m.goodwin@bham.ac.uk}
\thanks{2010 {\it Mathematics Subject Classification}: 17B10, 17B37.}
\begin{document}

\begin{abstract}
Let $\g$ be a simple Lie algebra over $\C$ and let $e \in \g$ be nilpotent.
We consider the finite $W$-algebra $U(\g,e)$ associated to $e$ and the
problem of determining the variety $\cE(\g,e)$ of 1-dimensional representations
of $U(\g,e)$.  For $\g$ of low rank, we
report on computer calculations that have been used to determine the
structure of  $\cE(\g,e)$, and the action of the component group $\Gamma_e$
of the centralizer of $e$ on $\cE(\g,e)$.
As a consequence, we provide two examples where the nilpotent orbit of $e$ is induced,
but there is a 1-dimensional $\Gamma_e$-stable $U(\g,e)$-module which is not induced
via Losev's parabolic induction functor.  In turn this gives examples where there is a
``non-induced'' multiplicity free primitive ideal $I$ of $U(\g)$.
\end{abstract}

\maketitle

\section{Introduction}

Let $G$ be a simple algebraic group over $\C$, let $\g = \Lie G$ be the Lie algebra of
$G$, and let $e \in \g$ be nilpotent.
We write $U(\g,e)$ for the finite $W$-algebra
associated to $\g$ and $e$.  Finite $W$-algebras were introduced into the mathematical
literature by Premet in \cite{PrST} in 2002, and have subsequently attracted a lot
of research interest, we refer to \cite{Losurv} for a survey up to 2010.
The problem of understanding the 1-dimensional representations of $U(\g,e)$
has been of particular interest due the relationship with completely prime and
multiplicity free primitive ideals in $U(\g)$, and consequently quantizations of the algebra of regular functions on the
nilpotent orbit of $e$; see for example \cite{PrMF} and \cite{LoQO}, and the references therein.
This paper makes a contribution by giving an explicit
description of the variety of 1-dimensional representations of $U(\g,e)$
for $\g$ of low rank.

We introduce some notation required to discuss the background to and the contents of this paper further.
Let $I_c$ be the two-sided ideal of $U(\g,e)$ generated by the commutators
$uv - vu$ for $u,v \in U(\g,e)$, and let $U(\g,e)^{\ab} := U(\g,e)/I_c$.
The maximal spectrum $\cE = \cE(\g,e)$
of $U(\g,e)^{\ab}$
parameterizes the 1-dimensional representations of $U(\g,e)$.  As explained in
\cite[\S5.1]{PrT}, there is an action of the component group $\Gamma = \Gamma_e$ of the centralizer of $e$
in $G$ on $U(\g,e)^{\ab}$ and thus
on $\cE$.
The fixed point variety of
$\Gamma$ in $\cE$ is denoted by $\cE^\Gamma$ and is
identified with the maximal spectrum of $U(\g,e)^{\ab}_\Gamma := U(\g,e)^{\ab}/I_\Gamma$,
where $I_\Gamma$ is the two sided ideal of $U(\g,e)^{\ab}$ generated by all elements
of the form $u - \gamma \cdot u$ for $u \in U(\g,e)^{\ab}$ and $\gamma \in \Gamma$.
We let $\g^e$ denote the centralizer of $e$ in $\g$, and note that there is an action of $\Gamma$ on $\g^e/[\g^e,\g^e]$
as explained in \cite[\S5.1]{PrT}.  Let $c(e) := \dim(\g^e/[\g^e,\g^e])$ and $c_\Gamma(e) := \dim((\g^e/[\g^e,\g^e])^\Gamma)$.
We write $\cO_e \sub \g$ for the nilpotent orbit of $e$.

We briefly give an overview of previous research on 1-dimensional representations of $U(\g,e)$, and refer
to the
introductions to \cite{PrT} and \cite{PrMF} for a more detailed account.

In \cite[Conjecture 3.1]{PrEA}, Premet predicted that $\cE \ne \varnothing$ for all $\g$ and $e$,
i.e.\ that there exists  a 1-dimensional representation of $U(\g,e)$.
For $\g$ of classical type, Losev proved the existence of 1-dimensional representations of $U(\g,e)$
in \cite[Theorem 1.2.3]{LoQS}.  In \cite[Theorem~1.1]{PrCQ}, Premet gave a reduction of the conjecture to the case $e$ is rigid, and further
showed in \cite[Section~3]{PrCQ} that $\cE$ is finite for rigid $e$.  We recall that $e$ is
said to be {\em rigid}  if $\cO_e$ cannot be obtained via Lusztig--Spaltenstein
induction from a nilpotent orbit in a Levi subalgebra of $\g$.
Using computational methods, it was verified that $\cE \ne \varnothing$ when $e$ is rigid and $\g$ is of
types $\rG_2$, $\rF_4$, $\rE_6$ and $\rE_7$
by R\"ohrle, Ubly and the second author  in \cite[Theorem 1.1]{GRU};
a number cases for $\g$ of type $\rE_8$ and $e$ rigid are covered
in \cite[Remark~5.1]{GRU}, and some further cases are dealt with in the PhD thesis of Ubly, \cite{Ub}.
In \cite[Theorem~1.1.1]{LoPI}, Losev gave an alternative reduction to the case of rigid nilpotent orbits by
introducing a parabolic induction functor; we give a brief
account of this functor in Section~\ref{S:paraind}.  Further, Losev gave a method for finding 1-dimensional
representations of $U(\g,e)$  in \cite[Theorem~5.2.1]{LoPI}, which was
used to deal with a further $E_8$ case. Subsequently, this method was successfully exploited by Premet in \cite{PrMF}
for all cases where $\g$ is of exceptional type and $e$ is rigid, which allowed
him to verify that in fact $\cE^\Gamma \neq \varnothing$  for all $\g$ and $e$, see \cite[Theorem~A]{PrMF}.

We now recall known results on the structure of the varieties $\cE$ and $\cE^\Gamma$.
For $\g$ of type $\rA$, in which case $\Gamma$ is trivial, it was proved by Premet that
$U(\g,e)^{\ab}$ is a polynomial algebra of degree $c(e)$, so that $\cE \iso \C^{c(e)}$, in \cite[Theorem~3.3]{PrCQ}.
For $\g$ of other types, Premet and Topley consider $U(\g,e)^{\ab}_\Gamma$ when $e$ is an induced nilpotent element in
\cite[Theorems 2 and 4]{PrT}.  It is shown that $U(\g,e)^{\ab}_\Gamma$ is a polynomial
algebra of degree $c_\Gamma(e)$ for most cases, but seven cases for the pair $(\g,\cO_e)$ are excluded.
These seven cases are listed in \cite[Table 0]{PrT} and we note that $\g$ is of exceptional type in all of them.
Moreover, in the proof of \cite[Theorem 5]{PrT}, it was shown that in the non-excluded cases all the 1-dimensional representations
corresponding to points in $\cE^\Gamma$ are obtained via the parabolic induction from \cite[Theorem 1.1.1]{LoPI}.
In addition, in \cite[Theorems 1 and 4]{PrT}, it was proved that if $\cO_e$ is induced and nonsingular,
and not one of six of the cases from \cite[Table 0]{PrT}, then $U(\g,e)^{\ab}$ is a polynomial algebra of degree $c(e)$;
we recall that $e$ is {\em nonsingular} if it lies in a unique sheet of $\g$ and refer to the introduction to
\cite{PrT} for more details.

In this paper, we complete the picture for $\g$ of low rank by explicitly describing the structure of $\cE$ and $\cE^\Gamma$ in
cases not dealt with in \cite{PrT}.  More specifically, we deal with the cases where
$\cO_e$ is singular or listed in \cite[Table~0]{PrT} and $\g$ has rank 4 or less,
and also such cases for $\g$ of type $\rE_6$.  Our methods are computational and build on those used in \cite{GRU}.
It is interesting to observe that the structure of $\cE$ and the action of $\Gamma$ can already become quite complicated in
these low rank cases.

The cases of most interest are the two cases from \cite[Table 0]{PrT} for $\g$
of type $\rF_4$ and $e$ with Bala--Carter label $\rC_3(a_1)$, and for $\g$ of type $\rE_6$
and $e$ with Bala--Carter label $\rA_3+\rA_1$.
In these cases, we find that $\cE^\Gamma$ has two irreducible components and is not equidimensional:
one component is isomorphic to $\C$ and the other an isolated point.
From this we can deduce that there are $\Gamma$-stable 1-dimensional representations
of $U(\g,e)$ that are not induced using the parabolic induction functor from \cite[Theorem 1.1.1]{LoPI}.
The result that we require to make the deduction
is Proposition~\ref{P:noninduced} which implies that if
a 1-dimensional representation of $U(\g,e)$ is parabolically
induced, then it lies in a positive dimensional component of $\cE$.
We mention that under the standard embedding of $\g_{\rF_4}$
into $\g_{\rE_6}$ the nilpotent orbit $\rC_3(a_1)$ maps into the nilpotent orbit
$\rA_3+\rA_1$.

Next we recall that there
is a bijection between the $\Gamma$-orbits of finite dimensional irreducible representations of $U(\g,e)$
and  the primitive ideals of $U(\g)$ with associated variety $\overline \cO_e$.  This bijection is constructed by Losev, see \cite[Theorem~1.2.2]{LoQS}
and \cite[Theorem~1.2.2]{LoFD}, and we note that it can also be described in terms of Skryabin's equivalence from \cite{Sk}.
Under this bijection, the points in $\cE^\Gamma$ correspond to multiplicity free primitive ideals.
We refer for example to the introduction to \cite{PrMF} for the definition of multiplicity
free primitive ideals, and note that as explained there any multiplicity free primitive ideal is completely prime, but that the converse holds only
if $\g$ is of type $\rA$.
Further, we note that \cite[Theorem~6.4.1]{LoPI} shows that the parabolic induction functor
for finite dimensional modules of finite $W$-algebras intertwines in an appropriate
sense with the induction of primitive ideals.  We refer for example to
\cite[\S1.6]{PrT} for a discussion of induction of primitive ideals of universal
enveloping algebras; the definition is recalled in Section~\ref{S:paraind}.

The intertwining alluded to above forms a key step in the proof of \cite[Theorem~5]{PrT}, where it is shown that
if $e$ is induced, and not one of the seven excluded cases in \cite[Table~0]{PrT}, then any multiplicity
free primitive ideal of $U(\g)$ with associated variety $\overline \cO_e$ is induced from
a completely prime ideal $I_0$ of $U(\l)$ for some Levi subalgebra $\l$ of $\g$.  It
is said that \cite[Theorem~5]{PrT} may be considered as a generalization of
M{\oe}glin's theorem in type $A$ from \cite{Moe}.
In the discussion following \cite[Theorem~5]{PrT} it is speculated
that it is ``quite possible'' that the statement also holds for the cases listed in \cite[Table 0]{PrT}.
However, given that our computations give non-induced 1-dimensional representations
of $U(\g,e)$, we can deduce the following theorem regarding existence of ``non-induced'' multiplicity
free primitive ideals.

\begin{Theorem} \label{T:noninduced}
Let $\g$ be of type $\rF_4$ and $\cO_e$ with Bala--Carter label $\rC_3(a_1)$, or let $\g$ be of type $\rE_6$ and $\cO_e$ with Bala--Carter label
$\rA_3+\rA_1$. Then
there is a multiplicity free primitive ideal of $U(\g)$ with associated variety $\overline \cO_e$ that
cannot be induced from a primitive ideal of $U(\l)$ for any proper Levi subalgebra $\l$ of $\g$.
\end{Theorem}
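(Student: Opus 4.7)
The plan is to combine three ingredients already laid out in the introduction: (i) the explicit computational description of $\cE^\Gamma$ in the two listed cases, which shows that $\cE^\Gamma$ has an isolated point as one of its irreducible components; (ii) the Losev bijection between $\Gamma$-orbits of finite dimensional irreducible representations of $U(\g,e)$ and primitive ideals of $U(\g)$ with associated variety $\overline \cO_e$, under which $\Gamma$-fixed points of $\cE$ correspond to multiplicity free primitive ideals; and (iii) Proposition~\ref{P:noninduced} together with Losev's intertwining (\cite[Theorem~6.4.1]{LoPI}) between parabolic induction for $W$-algebras and induction of primitive ideals.

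First, I would invoke the computer calculations to pick a point $\chi \in \cE^\Gamma$ lying in the isolated-point component of $\cE^\Gamma$ (and hence in a zero-dimensional component of $\cE$ as well, since $\cE^\Gamma$ is a closed subvariety of $\cE$; I would make this point precise using the fact that a component of $\cE^\Gamma$ meeting a positive dimensional $\Gamma$-stable component of $\cE$ would inherit positive dimension because $\Gamma$ is finite). The point $\chi$ gives a $\Gamma$-stable 1-dimensional representation $V_\chi$ of $U(\g,e)$.

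Next, under the Losev bijection, $V_\chi$ corresponds to a primitive ideal $I \subset U(\g)$ with associated variety $\overline \cO_e$, and because $\chi \in \cE^\Gamma$ the ideal $I$ is multiplicity free. Now suppose for contradiction that $I$ were induced from a primitive ideal of $U(\l)$ for some proper Levi subalgebra $\l$ of $\g$. By \cite[Theorem~6.4.1]{LoPI}, the bijection between finite dimensional irreducible $U(\g,e)$-modules (up to $\Gamma$-orbit) and primitive ideals with associated variety $\overline \cO_e$ intertwines Losev's parabolic induction functor for $W$-algebras with induction of primitive ideals; applied to the 1-dimensional representation $V_\chi$, this would force $V_\chi$ to be obtained via Losev's parabolic induction functor from \cite[Theorem~1.1.1]{LoPI}. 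But then Proposition~\ref{P:noninduced} would place $\chi$ in a positive dimensional component of $\cE$, contradicting the fact that $\chi$ lies in a zero-dimensional component.

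The only substantive step beyond bookkeeping is verifying that the isolated point of $\cE^\Gamma$ really does sit in a zero-dimensional component of the full variety $\cE$ (so that Proposition~\ref{P:noninduced} applies); this uses the explicit descriptions of $\cE$ obtained from the computations and the fact that $\Gamma$ acts by permutation of components. The main conceptual obstacle is the correct application of \cite[Theorem~6.4.1]{LoPI}: one must be careful that induction of primitive ideals from $U(\l)$ (in the sense recalled in Section~\ref{S:paraind}) on the $U(\g)$ side matches exactly with Losev's parabolic induction on the $W$-algebra side, so that non-induction of the $W$-algebra module really does imply non-induction of the primitive ideal. Once this is granted, the contrapositive of Proposition~\ref{P:noninduced} closes the argument.
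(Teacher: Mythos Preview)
Your proposal is correct and follows essentially the same route as the paper's proof: pick the $\Gamma$-fixed isolated point, pass to the corresponding multiplicity free primitive ideal via Losev's $\cdot^\dagger$ map, assume it is induced, pull back through the intertwining with Losev's parabolic induction, and contradict Proposition~\ref{P:noninduced}.

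Two small points to tighten. First, your parenthetical general argument that an isolated point of $\cE^\Gamma$ must lie in a zero-dimensional component of $\cE$ (``would inherit positive dimension because $\Gamma$ is finite'') is not valid in general: a finite group can act on an irreducible positive-dimensional variety with a zero-dimensional fixed locus. You are right to fall back on the explicit description of $\cE$ in these two cases, and that is exactly what the paper does (it simply notes the isolated point of $\cE^\Gamma$ is also isolated in $\cE$). Second, the ``bookkeeping'' you allude to has a couple of steps worth making explicit: the intertwining (the paper uses \cite[Corollary~6.4.2]{LoPI}) only tells you that $\rho_\q^\g(M')$ and $V_\chi$ have the same image under $\cdot^\dagger$, hence lie in the same $\Gamma$-orbit; you then need $\Gamma$-fixedness of $V_\chi$ to conclude $V_\chi \cong \rho_\q^\g(M')$, and finally the dimension-preserving property of $\rho_\q^\g$ to ensure $M'$ is one-dimensional so that Proposition~\ref{P:noninduced} applies.
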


In Section~\ref{S:irrcomp}, we recall a theorem of Premet, \cite[Theorem~1.2]{PrCQ}, which
is fundamental in understanding the set $\Comp(\cE)$ of irreducible components of $\cE$.
Then we are able to explain how this can be interpreted for the cases where we have
calculated $\cE$, and that this verifies low rank cases of a recent conjecture of Losev
from \cite[\S5.4]{LoOM}.

Lastly in the introduction, we mention that there are potential applications of our results to the
representation theory of modular Lie algebras.  This requires the reduction
modulo $p$ of finite $W$-algebras introduced by Premet, see for example
\cite{PrCQ}.  The applications would be in the study of minimal dimensional
representations of reduced enveloping algebras, and we note that the nature
of the computations put some restrictions on the characteristic.

An outline of this paper is as follows.  In Section~\ref{S:background} we recall the background
on finite $W$-algebras that we require to explain our algorithm and results.  An outline of the
algorithm is presented in Section~\ref{S:algorithm} and then results of the computations are
explained in Section~\ref{S:results}.  In Section~\ref{S:paraind}, we give a recollection
of the parabolic induction from \cite{LoPI},  prove Proposition~\ref{P:noninduced} and
then deduce Theorem~\ref{T:noninduced}.  Lastly in Section~\ref{S:irrcomp} we relate our
results to Premet's map on irreducible components.

\subsection*{Acknowledgments}  We thank
A.~Premet and L.~Topley for very helpful discussions and email
correspondence about this work.  The first author is grateful
for travel funds from SUNY, Oneonta to visit the University of Birmingham.

\section{Background on \texorpdfstring{$U(\g,e)$}{U(g,e)} and its PBW basis} \label{S:background}

We recall the relevant facts about $U(\g,e)$ necessary to calculate $\cE$.
This is mostly taken from \cite{PrST} or \cite{PrEA}, and we only reference results
not contained there.

 \subsection{Definition of \texorpdfstring{$U(\g,e)$}{U(g,e)}}
We continue to use the notation given in the introduction, so that $G$ is a simple algebraic group over $\C$, and $e$ is a nilpotent element in the
Lie algebra $\g$ of $G$.  We write $\Gamma := G^e/(G^e)^\circ$ for the component group
of the centralizer of $e$ in $G$.  Also we fix $(\cdot\,,\cdot)$ to be a non-degenerate invariant
symmetric bilinear form on $\g$ (for example the Killing form) and let $\chi := (e,\cdot) \in \g^*$.
We embed $e$ into an $\sl_2$-triple $(e,h,f)$.  A maximal toral subalgebra $\t$ of $\g$ is said to
be {\em compatible} with $(e,h,f)$ it $h \in \t$ and $\t$ contains a maximal toral subalgebra $\t^e$
of the centralizer $\g^e$ of $e$ in $\g$.  We fix $\t$ to be a compatible maximal toral subalgebra
of $\g$.  We denote the restricted root system of $\g$ with respect to $\t^e$ by $\Phi^e$, as defined in \cite[Section 3]{BG}.
Also we define the {\em normalizer} of $e$ in $\t$ to be $\t^{[e]} := \{x \in \t \mid [x,e] \in \C e\}$, and note that this is equal to $\t^e \oplus \C h$.

The $\ad h$ eigenspace decomposition determines a grading
$$
\g = \bigoplus_{j \in \Z} \g(j),
$$
where $\g(j) := \{ x \in \g \mid [h,x] = jx\}$.
We define a symplectic form $\langle \cdot,\cdot \rangle$ on $\g(-1)$ by
$\langle x,y \rangle := \chi([x,y])$, for $x,y \in \g(-1)$.
Let $\l$ be an isotropic subspace of $\g(-1)$ with respect to the form $\langle \cdot , \cdot \rangle$
and let $\l^\perp := \{x \in\g(-1) \mid \langle x,y \rangle = 0\}$.
Also let $\l'$ be a subspace of $\g(-1)$ which is complementary to $\l$.
We may, and do, assume that
$\l$ and $\l'$ are stable under the adjoint action of $\t$ as this is suitable for our
calculations.

Let $\m := \l \oplus \bigoplus_{i \le -2} \g(i)$
and $\n := \l^\perp \oplus \bigoplus_{i \le -2} \g(i)$, which
are nilpotent subalgebras of $\g$ stable under the adjoint action of $\t$.
Then $\chi$ restricts to a character of $\m$ and we
let $I$ be the left ideal of $U(\g)$ generated by $\{x - \chi(x) \mid x \in \m \}$.
There is an adjoint action of $\n$ on $U(\g)/ I$ and the {\em finite $W$-algebra}
is defined to be
$$
U(\g,e) := (U(\g)/I)^\n = \{ u + I \in U(\g)/I \mid [x,u] \in I \text{ for all } x \in \n \}.
$$
We note that the definition of $U(\g,e)$ below depends on the choice of $\l$, but only up to isomorphism
thanks to \cite[Theorem 4.1]{GG}.

\subsection{The component group \texorpdfstring{$\Gamma$}{Gamma}}
Let $C$ be the centralizer of $h$ in $G$, so that $\Lie C = \g(0)$, and let $C^e$ be the centralizer
of $e$ in $C$.  The component group $\Gamma$ is isomorphic to $C^e/(C^e)^\circ$.  For the case
$\l = 0$,
there is an adjoint action of $C^e$ on $U(\g,e)$.  In the cases considered in this paper, it turns
out that we can choose lifts in $C^e$ of the elements of $\Gamma$, which generate a subgroup of $C^e$ isomorphic to
$\Gamma$.  Thus, in this paper, we allow ourselves to speak of an action of $\Gamma$ on $U(\g,e)$, though we do not claim that
there is an action of $\Gamma$ on $U(\g,e)$ in general.  Also  if $C^e$ is connected, so that $\Gamma$ is trivial,
then we can still speak of the action of $\Gamma$ on $U(\g,e)$, when $\l$ is chosen to be a nonzero
isotropic subspace of $\g(-1)$.
We note that this action of
$\Gamma$ on $U(\g,e)$ induces an action on $U(\g,e)^{\ab}$, which is the same
as the action considered in the introduction.

\subsection{PBW bases for \texorpdfstring{$U(\g)$}{U(g)} and \texorpdfstring{$U(\g,e)$}{U(g,e)}}
Let $\bp := \mathfrak{l}' \oplus \bigoplus_{i \ge 0} \g(i)$. Note that if $\l \ne 0$, then $\bp$ is not
necessarily a subalgebra of $\g$;
it is just a $\t$-stable subspace of $\g$.
We fix a Chevalley basis $\{x_1,\dots,x_m,y_1,\dots,y_s\}$ of $\g$ with respect to $\t$
such that $\{x_1,\dots,x_m\}$ is a basis of $\bp$ and $\{y_1,\dots,y_s\}$ and basis for $\m$.
This is chosen so that $\{y_1,\dots,y_l\} \sub \{y_1,\dots,y_s\}$
is a minimal generating set of $\m$.
We have that $x_1,\dots,x_m$ are weight vectors
for $\t^{[e]}$: we write $n_i$ for the eigenvalue of $h$ and $\alpha_i \in \Phi^e \cup \{0\}$
for the $\t^e$-weight of $x_i$.

We obtain a PBW basis of $U(\g)$ with elements $x^\ba y^\bb := x_1^{a_1} \dots x_m^{a_m} y_1^{b_1} \dots y_s^{b_s}$
for $\ba \in \Z_{\ge 0}^m$, $\bb \in \Z_{\ge 0}^s$.
This PBW basis can be used to give an isomorphism of vector spaces $S(\bp) \isoto U(\g)/I$
defined by $x^\ba \mapsto x^\ba + I$;
this isomorphism is helpful when making calculations, as it allows to us to work in the vector space $S(\bp)$
instead of the quotient space $U(\g)/I$.

To give a PBW basis for $U(\g,e)$,
we fix a basis $\{z_1,\dots,z_r\}$ of $\g^e$, consisting of $\t^{[e]}$-weight vectors;
chosen so that $\{z_1,\dots,z_p\} \sub \{z_1, \dots, z_r\}$ is a minimal generating set of $\g^e$.
Let $m_i$ be the $\ad h$-eigenvalue and $\beta_i \in \Phi^e \cup \{0\}$ be
the $\t^e$-weight of $z_i$.

For $\ba \in \Z_{\ge 0}^m$ we define $|\ba| := \sum_{i=1}^m a_i$ to be the {\em total degree},  $|\ba|_e := \sum_{i=1}^m (n_i+2)a_i$ to be
the {\em Kazhdan degree},
and $\wt(\ba) := \sum_{i=1}^m a_i \alpha_i \in \Z\Phi^e$ to be the $\t^e$-weight of $x^\ba$.
We make similar definitions for $\bb \in \Z_{\ge 0}^r$, i.e.\
we define $|\bb| := \sum_{i=1}^r b_i$, $|\bb|_e := \sum_{i=1}^r b_i(m_i+2)$,
and $\wt(\bb) := \sum_{i=1}^r b_i \beta_i$.

By the PBW theorem for $U(\g,e)$, there is a (non-unique) vector space monomorphism
\begin{equation} \label{e:Theta}
\Theta : \g^e \to U(\g,e)
\end{equation}
equivariant under the action of $\t^e$ and $\Gamma$, and
such that $\{\Theta(z_i) \mid i = 1,...,r\}$ generates $U(\g,e)$ and the PBW
monomials
$$
\{\Theta(z_1)^{b_1} \cdots \Theta(z_r)^{b_r} \mid \bb \in \Z_{\ge 0}^r\}
$$
form a basis of $U(\g,e)$.
Moreover, $\Theta$ can be chosen so that
$$
\Theta(z_i)
=  z_i + \sum_{\substack{\ba \in \Z_{\ge 0}^m, \\ |\ba|_e \le n_i + 2, \\ \wt(\ba) = \beta_i}} \lambda_{\ba}^i x^\ba + I,
$$
where $\lambda_\ba \in \C$ satisfy $\lambda_\ba = 0$ whenever $|\ba|_e = n_i+2$ and $|\ba| = 1$.

We abbreviate notation and write $\Theta_i := \Theta(z_i)$, and $\Theta^\bb := \Theta_1^{b_1} \cdots \Theta_r^{b_r}$ for
$\bb = (b_1,\dots,b_r) \in \Z_{\ge 0}^r$. We also write $z^\bb := z_1^{b_1} \dots z_r^{b_r} \in U(\g^e)$ for $\bb \in \Z_{\ge 0}^r$.

\subsection{Commutators in \texorpdfstring{$U(\g,e)$}{U(g,e)}}
We recall the form of the commutators of the PBW generators of $U(\g,e)$, and explain how
these can be used to determine the variety of 1-dimensional representations $\cE$ of $U(\g,e)$.

For our generating set $\Theta_1, \dots, \Theta_r$ of $U(\g,e)$,
the commutators are of the form
\begin{equation} \label{e:comms}
[\Theta_i,\Theta_j] = \Theta([z_i,z_j]) + \sum_{\substack{\bb \in \Z_{\ge 0}^r, \\|\bb|_e \le m_i+m_j+2, \\ \wt(\bb) = \beta_i + \beta_j}} \nu_\bb^{i,j} \Theta^\bb,
\end{equation}
 For convenience we incorporate $\Theta([z_i,z_j])$ into this sum and write
$[\Theta_i,\Theta_j] = \sum_{\bb \in \Z_{\ge 0}^r} \nu_\bb^{i,j} \Theta^\bb$.

A 1-dimensional representation of $U(\g,e)$ is determined by $(\theta_1,\dots,\theta_r) \in \C^r$ such that
\begin{equation} \label{e:1dimcond}
\sum_{\bb \in \Z_{\ge 0}^r} \nu_\bb^{i,j} \theta^\bb = 0
\end{equation}
for all $i,j \in \{1,\dots,r\}$; here $\theta^\bb := \theta_1^{b_1} \dots \theta_r^{b_r}$.  So that
we can identify $\cE$ with the variety formed by such $(\theta_1,\dots,\theta_r) \in \C^r$.

In general the most computationally expensive part in our algorithm for determining $\cE$ is finding the commutators $[\Theta_i, \Theta_j]$.
The results in \cite[\S3]{GRU} allow us only find the minimal number of commutators in order to solve the equations \eqref{e:1dimcond}.
First, we note that for $(\theta_1,\dots,\theta_r) \in \C^r$ to give
a 1-dimensional representation we must have $\theta_i = 0$ if $\beta_i \ne 0$.
In particular, this implies that we do not need to find the commutators $[\Theta_i, \Theta_j]$ when $\beta_i \neq - \beta_j$.
Also we do not need to find the commutators $[\Theta_i, \Theta_j]$ when neither $z_i$ or $z_j$ is in our minimal generating set
$\{z_1,\dots,z_p\}$ of $\g^e$, as these commutators can be deduced from the others.
Let
$I := \{i \in {1,\dots,r} \mid \beta_i = 0\}$ and
\begin{equation} \label{e:mincomms}
J := \{(i,j) \in \{1,\dots,p\} \times \{1,\dots,r\} \mid \beta_j = -\beta_i\}.
\end{equation}
Summarizing the discussion above, \cite[Proposition 3.5]{GRU} says $\cE$ is completely determined by solutions to the equations
\begin{equation} \label{e:mincomms2}
\sum_{\bb \in \Z_{\ge 0}^I} \nu_\bb^{i,j} \theta^\bb = 0
\end{equation}
for $(i,j) \in J$, where by $\Z_{\ge 0}^I$ we mean the subset of $\Z_{\ge 0}^r$ of
those $\bb \in \Z_{\ge 0}^r$ for which $b_i = 0$ for $i \not\in I$.

\section{The Algorithm} \label{S:algorithm}

Our algorithm for determining $\cE$ and $\cE^\Gamma$ is based on the algorithm in \cite[$\S4$]{GRU}, though we have incorporated
some significant improvements, which allow us to deal with more complicated cases.
This includes taking account of the action of $\Gamma$ in the definition of the map $\Theta$ from \eqref{e:Theta}, which
tends to make the commutators simpler.  Also we work directly with the Chevalley basis for $\g$, which appears
to improve the efficiency of the algorithm.  Many other optimizations are included,
but in the outline of the algorithm below we do not include the details of all of these for simplicity.

We have programmed the computational steps of the algorithm
in the computer algebra language GAP, \cite{GAP}, so that the internal functions for Lie algebras can be used.
Custom classes and functions were programmed to do calculations in $U(\g)$ (as opposed to the built-in universal enveloping
algebra functions), as this was more convenient for our calculations.

In the description of the algorithm below, we use the notation introduced in the previous section.
Also we use {\em italics} to give some comments
to help with understanding.

\smallskip
\noindent
{\bf Input:} A simple Lie algebra $\g$ over $\C$ with maximal toral subalgebra $\t$, a nilpotent element $e$ of $\g$, and
generators $\{g_1,\dots,g_a\}$ for $\Gamma$ viewed as a subgroup of $\Aut(\g)$.

\noindent
{\em The nilpotent element $e$ input must be compatible with $\t$: it is chosen by using pyramids
from \cite[Sections 5 and 6]{EK} for classical types, and
for exceptional Lie algebras an orbit representative for each orbit can be found in \cite[Section 11]{LT}.
The lifts of the generators of $\Gamma$ are found as elements of $C^e$ as explained in the previous section, and
then viewed as elements of $\Aut(\g)$: for classical Lie algebras, explicit
formulas can be given in terms of the Dynkin pyramids by extending the methods used in \cite[Section 6]{Bro};
and for exceptional types explicit generators can be found in \cite[Section 11]{LT}.}

\smallskip
\noindent
{\bf Steps in the algorithm:}

\begin{enumerate}

\item
Find an $\mathfrak{sl}_2$-triple $(e,h,f)$,

\item Make a choice $\l$ of an isotropic subspace of $\g(-1)$
stable under the adjoint action of $\Gamma$,
and a complement $\l'$ of $\l$ in $\g(-1)$.  \\
{\em In the cases that we consider, we take $\l = 0$ when $\Gamma$ is nontrivial, and $\l$ to be Lagrangian
when $\Gamma$ is trivial.}

\item Choose bases $\{x_1,\dots,x_m\}$ of $\bp$ and $\{y_1,\dots,y_s\}$ of $\m$.
The basis of $\m$ is chosen so that $\{y_1,\dots,y_l\}$ is a minimal subset that generates $\m$ as a Lie algebra.

\item
Calculate a basis $\{z_1,\dots,z_r\}$ for $\g^e$ such that each $z_i$ is a weight vector for $\t^{[e]}$, and such that $\{z_1,\dots,z_p\}$
is a minimal generating set of $\g^e$.

\item For each basis element $z_i$ of $\g^e$ we find $\Theta_i \in U(\g,e)$ using the following steps:

\begin{enumerate}
\item
Determine the set
$$
\tilde A_i := \{\ba \in \Z_{\ge 0}^m \mid \wt(\ba) = \beta_i, \text{ and } |\ba|_e \le n_i \text{ or } |\ba|_e = n_i+2, |\ba| > 1\}.
$$
  Let $T$ be the maximal torus of $G$ whose Lie algebra is $\t$.
  For $\sigma \in \Gamma\cap T$, let $c_\sigma \in \C$ be such that $\sigma (z_i) = c_\sigma z_i$ and let
$$
    A_i := \{ \ba \in \tilde A_i \mid \sigma(x^\ba) = c_\sigma x^\ba \text{ for all } \sigma \in \Gamma \cap T\}.
$$
Enumerate $A_i = \{\ba^1,\dots,\ba^M\}$. \\
For indeterminants $t_1,\dots,t_M$, let
$\tilde \Theta_i := z_i + \sum_{j=1}^M t_j x^{\ba^j}+I$.

\item
For $k=1,\dots,l$, calculate $[y_k,\tilde \Theta_i]$ in the form
$\sum_{\ba \in \Z_{\ge 0}^m} \lambda_\ba(t_1,\dots,t_M) x^\ba + I$, where
$\lambda_\ba(t_1,\dots,t_M)$ is a linear combination of $t_1,\dots,t_M$. \\
{\em Of course only finitely many of the $\lambda_\ba$ are nonzero.}

\item
Determine a solution to the system  of linear equations $\lambda_\ba(t_1,\dots,t_M) = 0$, for $\ba \in \Z_{\ge 0}$. \\
{\em This system will be very large in general, so finding a solution requires employing methods from linear algebra
involving sparse matrices.}

\item Let $c_1,\dots,c_M$ be this solution and
set $\Theta_i := z_i + \sum_{j=1}^M c_j x^{\ba^j}$.

\end{enumerate}

\item
Calculate the set of commutators $[\Theta_i,\Theta_j]$ for $(i,j) \in J$,
where $J$ is defined in \eqref{e:mincomms}.  These commutators are calculated
in the form given in \eqref{e:comms} by using the following procedure to write an
element $u \in U(\g,e)$ (which we assume to be a $\t^e$-weight vector of weight $\gamma \in \Z\Phi^e$)
as a linear combination of the PBW basis $\{\Theta^\bb \mid \bb \in \Z_{\ge 0}^r\}$.
\begin{enumerate}
\item Write $u = \sum_{\ba \in \Z_{\ge 0}^r} \mu_\ba x^\ba + I$ $\in U(\g,e)$,
let $R(u)$ be maximal in $\{ |\ba|_e \mid \mu_\ba \ne 0\}$, let $S(u)$
be minimal in $\{ |\ba| \mid |\ba|_e = R(u)\}$. \\
Let $M(u) := \{\ba \in \Z_{\ge 0}^m \mid  |\ba|_e = R(u), |\ba| = S(u)\}$.
\item From the PBW theorem for $U(\g,e)$ it follows that
$\sum_{\ba \in M(u)} \mu_\ba x^\ba$ can be expressed in the form $\sum_{\bb \in \Z_{\ge 0}^r} \delta_\bb z^\bb$,
where $\lambda_\bb \in \C$, and the sum is over those $\bb \in \Z_{\ge 0}^r$ with $|\bb| = S(u)$ and $\t^e$-weight $\gamma$.
The coefficients $\delta_\bb$ are found by solving a system of linear equations.
\item We consider
$$
v := u - \sum_{\bb \in \Z_{\ge 0}^r} \delta_\bb \Theta(z)^\bb.
$$
We have that $R(v) < R(u)$, or $R(v) = R(u)$ and $S(v) > S(u)$.
Thus we can recursively continue to subtract terms and obtain an
expression for $u$ as a linear combination of the PBW basis $\{\Theta^\bb \mid \bb \in \Z_{\ge 0}^r\}$.
\end{enumerate}

\item

Use the commutators found in the previous step to find the values
$\theta_1,\dots,\theta_r \in \C$ that are solutions of
$$
\sum_{\ba \in \Z_{\ge 0}^r}  \nu_\ba^{i,j} \theta^\ba = 0.
$$
In fact we consider the reduced system of equations given by \eqref{e:mincomms2}. \\
{\em Finding the solution to a system of non-linear equations is done with ad hoc methods
as the degrees of the equations are low in the examples we are considering.}

\item Use the action of $\Gamma$ on $\{z_1, \dots, z_r\}$ to calculate the action of $\Gamma$ on $\cE$
and to determine $\cE^\Gamma$.
\end{enumerate}

\smallskip
\noindent
{\bf Output:} The description of $\cE$ and $\cE^\Gamma$ in terms of the values $\theta_1,\dots,\theta_r$ that can be taken by
$\Theta_1,\dots,\Theta_r$ when $U(\g,e)$ acts on a 1-dimensional module.

\section{Results} \label{S:results}

We have run our program to determine
$\cE$ and $\cE^\Gamma$ for all cases of induced orbits
that are not covered by \cite[Theorems 2 and 4]{PrT} and for which $\g$ has rank 4 or less, or is of type $\rE_6$.
The calculations were done on a typical desktop computer.  Nearly all of the computationally intensive steps are highly parallelizable,
so with access to a large enough distributed system of computers it is plausible that similar calculations would be able deal with
more of the orbits in exceptional Lie algebras from \cite[Table 0]{PrT}.

We label each of the cases that we have calculated by the type of $\g$, and a label for the nilpotent orbit: for exceptional types we
give the Bala--Carter label and for classical types we give the partition giving the Jordan type.
We present the results of these
computations below.  For each case we explain the structure of $\cE$, and then the action of
$\Gamma$ on $\cE$.  To do this we have implicitly
introduced some coordinates on the irreducible components.  Then
we state the structure of $\cE^\Gamma$; we note this is known from
\cite[Theorems 2 and 4]{PrT} in all except the cases $(\rF_4,\rC_3(a_1))$ and $(\rE_6,\rA_3+\rA_1)$,
but we include it for completeness.

\subsection*{\texorpdfstring{$\rG_2: \rG_2(a_1)$}{G2: G2(a1)}} $ $
\begin{itemize}
\item $\cE$ has four components isomorphic to $\C$ which pairwise meet at a common point.
\item $\Gamma \iso S_3$ fixes one of the components and the other three are permuted by $\Gamma$.
\item $\cE^\Gamma \iso \C$.
\end{itemize}

\subsection*{\texorpdfstring{$\rF_4: \rC_3(a_1)$}{F4: C3(a1)}} $ $
\begin{itemize}
\item $\cE$ has four components: one isomorphic to $\C$ and three isolated points.
\item $\Gamma \iso S_2$.  The 1-dimensional component and one of the
points are fixed by $\Gamma$ and the other two points are transposed.
\item $\cE^\Gamma \iso \C \sqcup \{\pt\}$.
\end{itemize}

\subsection*{\texorpdfstring{$\rF_4: \rF_4(a_1)$}{F4: F4(a1)}} $ $
\begin{itemize}
\item $\cE$ has two components isomorphic to $\C^3$ and their intersection is isomorphic to $\C^2$.
\item $\Gamma \iso S_2$.
One component is fixed by $\Gamma$ and the other is reflected
in the intersection of the two components.
\item $\cE^\Gamma \iso \C^3$.
\end{itemize}

\subsection*{\texorpdfstring{$\rF_4: \rF_4(a_2)$}{F4: F4(a2)}} $ $
\begin{itemize}
\item $\cE$ has two components isomorphic to $\C^2$ and their intersection is isomorphic to $\C$.
\item $\Gamma \iso S_2$.
One component is fixed by $\Gamma$ and the other is reflected
in the intersection of the two components.
\item $\cE^\Gamma \iso \C^2$.
\end{itemize}

\subsection*{\texorpdfstring{$\rF_4: \rF_4(a_3)$}{F4: F4(a3)}} $ $
\begin{itemize}
\item $\cE$ has three components isomorphic to $\C^2$ and five components isomorphic to $\C$: all pairwise intersections are a common point.
\item $\Gamma \iso S_4$.  The three components isomorphic to $\C^2$ are permuted by the
quotient of $\Gamma$ isomorphic to $S_3$.   One of the components isomorphic to $\C$ is fixed by $\Gamma$.  The other
four components isomorphic to $\C$ are permuted by $\Gamma$ in a natural way.
\item $\cE^\Gamma \iso \C$.
\end{itemize}

\subsection*{\texorpdfstring{$\rE_6: \rA_3+\rA_1$}{E6:A3+A1}}  $ $
\begin{itemize}
\item $\cE$ has two components: one isomorphic to $\C$, and the other a point.
\item $\Gamma$ is trivial.
\item $\cE^\Gamma \iso \C \sqcup \{\pt\}$.
\end{itemize}

\subsection*{\texorpdfstring{$\rE_6: \rE_6(a_3)$}{E6: E6(a3)}} $ $
\begin{itemize}
\item $\cE$ has two components, one isomorphic to $\C^4$, the other isomorphic to $\C^3$, and their intersection
is isomorphic to $\C^2$
\item $\Gamma \iso S_2$.  The component isomorphic to $\C^3$ is fixed by $\Gamma$.  The nonidentity element
acts on the component isomorphic to $\C^4$ via $(x,y,z,w) = (-x,-y,z,w)$, where the
intersection of the two components is $\{(0,0,z,w) \mid z,w \in \C\}$.
\item $\cE^\Gamma \iso \C^3$.
\end{itemize}

\subsection*{\texorpdfstring{$\rE_6: \rD_4(a_1)$}{E6: D4(a1)}} $ $
\begin{itemize}
\item $\cE$ has 5 components, four of which are isomorphic to $\C^2$ and the other is isomorphic to $\C$.
All of the components pairwise intersect in a common point.
\item $\Gamma \iso S_3$.  Three of the components isomorphic to $\C^2$ are permuted by $\Gamma$ in the natural way.
The action of $\Gamma$ on the fourth component isomorphic to $\C^2$ is via the irreducible 2-dimensional
representation of $\Gamma$.  The component isomorphic to $\C$ is fixed by $\Gamma$.
\item $\cE^\Gamma \iso \C$.
\end{itemize}

\subsection*{\texorpdfstring{$\rC_2: (2,2)$}{C2: (2,2)}} $ $
\begin{itemize}
\item $\cE$ has two components isomorphic to $\C$, which intersect in a point.
\item $\Gamma \iso S_2$.  One component is fixed by $\Gamma$ and the other is reflected in the intersection
of the two components.
\item $\cE^\Gamma \iso \C$.
\end{itemize}

\subsection*{\texorpdfstring{$\rC_3: (4,2)$}{C3: (4,2)}} $ $
\begin{itemize}
\item $\cE$ has two components isomorphic to $\C^2$ and their intersection is isomorphic to $\C$.
\item $\Gamma \iso S_2$.
One component is fixed by $\Gamma$ and the other is reflected
in the intersection of the two components.
\item $\cE^\Gamma \iso \C^2$.
\end{itemize}

\subsection*{\texorpdfstring{$\rB_3: (5,1,1)$}{B3: (5,1,1)}}
\begin{itemize}
\item $\cE$ has two components isomorphic to $\C^2$ and their intersection is isomorphic to $\C$.
\item $\Gamma \iso S_2$.
One component is fixed by $\Gamma$ and the other is reflected
in the intersection of the two components.
\item $\cE^\Gamma \iso \C^2$.
\end{itemize}

\subsection*{\texorpdfstring{$\rC_4: (4,2,2)$}{C4: (4,2,2)}} $ $
\begin{itemize}
\item $\cE$ has two components: one isomorphic to $\C^2$, the other isomorphic to $\C$, and they intersect in a point.
\item $\Gamma \iso S_2$.  The component isomorphic to $\C^2$ is fixed by $\Gamma$.
The component isomorphic to $\C$ is reflected in the intersection of the two components.
\item $\cE^\Gamma \iso \C^2$.
\end{itemize}

\subsection*{\texorpdfstring{$\rC_4: (6,2)$}{C4: (6,2,2)}} $ $
\begin{itemize}
\item $\cE$  has two components isomorphic to $\C^3$,
and their intersection is isomorphic to $\C^2$.
\item $\Gamma \iso S_2$.
One component is fixed by $\Gamma$ and the other is reflected
in the intersection of the two components.
\item $\cE^\Gamma \iso \C^3$.
\end{itemize}

\subsection*{\texorpdfstring{$\rB_4: (7,1,1)$}{B4: (7,1,1)}} $ $
\begin{itemize}
\item $\cE$  has two components isomorphic to $\C^3$,
and their intersection is isomorphic to $\C^2$.
\item $\Gamma \iso S_2$.
One component is fixed by $\Gamma$ and the other is reflected
in the intersection of the two components.
\item $\cE^\Gamma \iso \C^3$.
\end{itemize}

\subsection*{\texorpdfstring{$\rB_4: (5,3,1)$}{B4: (5,3,1)}} $ $
\begin{itemize}
\item $\cE$ has three components isomorphic to $\C^2$,
where each pair of components intersects in a variety isomorphic to $\C$ and the
three components intersect in a point.
\item $\Gamma \iso S_2 \times S_2$.  Denote the components by $A$, $B$ and $C$ and let $r,s$ be generators of $\Gamma$.
The component $A$ is fixed by $\Gamma$; while $r$ fixes $B$ and acts on $C$ by the reflection in $A \cap C$ and
$s$ fixes $C$ and acts on $B$ by the reflection in $A \cap B$.
\item $\cE^\Gamma \iso \C^2$.
\end{itemize}

\subsection*{\texorpdfstring{$\rD_4: (3,3,1,1)$}{D4: (3,3,1,1)}} $ $
\begin{itemize}
\item $\cE$ has two components: one isomorphic to $\C^2$ and the other isomorphic to $\C$, and they
intersect in a point.
\item $\Gamma \iso S_2$.  The component isomorphic to $\C$ is fixed by $\Gamma$, and the non-identity
element of $\Gamma$ acts on the component isomorphic to $\C^2$ by $(x,y) \mapsto (-x,-y)$.
\item $\cE^\Gamma \iso \C$.
\end{itemize}

\section{Parabolic induction for finite \texorpdfstring{$W$}{W}-algebras} \label{S:paraind}

The goal of this section is to prove Theorem~\ref{T:noninduced}.  We need to provide some
preliminaries beginning with the Lusztig--Spaltenstein induction of nilpotent orbits.

Let $\g'$ be a Levi subalgebra of $\g$, with $\g' \ne \g$, and let $\q = \g' \oplus \u$ be a parabolic subalgebra of
$\g$ with Levi factor $\g$ and nilradical $\u$.
Lusztig-Spaltenstein induction provides a way to induce a nilpotent orbit $\cO'$ in $\g'$ to a
nilpotent orbit $\cO$ in $\g$; it is defined by declaring that $\cO$ is the unique orbit such that
$(\cO' + \u) \cap \cO$ is open in $\cO' + \u$.
We fix a nilpotent orbit $\cO'$ in $\g'$, let $\cO$ be the nilpotent orbit in $\g$ obtained from
$\cO'$ by Lusztig--Spaltenstein induction, and let
$e' \in \cO'$ and $e \in \cO$.  Let $\cE'$ and $\cE$ be the varieties of one-dimensional
$U(\g',e')$-modules and $U(\g,e)$-modules respectively.

In \cite[Theorem 1.2.1]{LoPI} Losev introduced a {\em parabolic induction} functor
$$
\rho_\q^\g : U(\g',e')\modfd \to U(\g,e)\modfd
$$
from the category
of finite dimensional $U(\g',e')$-modules to the category of finite dimensional $U(\g,e)$-modules.
Moreover, $\rho_\q^\g$ is dimension preserving, so
determines a morphism $\cE' \to \cE$, which by \cite[Theorem 6.5.2]{LoPI} is a finite morphism.

We are now ready to state and prove Proposition \ref{P:noninduced}, which is the key result
we require to prove Theorem~\ref{T:noninduced}.  In the statement $\operatorname{rank} \g$ denotes the rank of $\g$, and
$\operatorname{ssrank} \g'$ denotes the semisimple rank of $\g'$.

\begin{Proposition} \label{P:noninduced}
Let $M'$  be a be a 1-dimensional $U(\g',e')$-module corresponding
to a point in $\cE'$ and let $M = \rho_\q^\g(M')$.
Then the point of $\cE$ corresponding to $M$ lies in an
irreducible component of $\cE$ of dimension at least $\operatorname{rank} \g - \operatorname{ssrank} \g'$.
\end{Proposition}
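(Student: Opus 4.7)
The plan is to deduce the proposition from two facts: every irreducible component of $\cE'$ has dimension at least $\operatorname{rank}\g - \operatorname{ssrank}\g' = \dim \z(\g')$, and the morphism $\cE' \to \cE$ induced by $\rho_\q^\g$ is finite. Granting the dimension bound on components of $\cE'$, if $Y \sub \cE'$ is the irreducible component containing $M'$ and $Z \sub \cE$ is an irreducible component of $\cE$ containing $\rho_\q^\g(Y)$, then $Z$ contains the point corresponding to $M$, and since a finite morphism is closed and preserves dimension on irreducible closed subsets one obtains
\[
\dim Z \;\ge\; \dim \rho_\q^\g(Y) \;=\; \dim Y \;\ge\; \operatorname{rank}\g - \operatorname{ssrank}\g',
\]
as required.

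To establish the dimension bound on $\cE'$, my plan is to exploit the central subalgebra coming from $\z(\g')$. The key point is that the $W$-algebra construction for $(\g',e')$ completely ignores $\z(\g')$: since $e'$ is nilpotent we have $e' \in [\g',\g']$, and an $\sl_2$-triple $(e',h',f')$ for $e'$ may be chosen inside $[\g',\g']$; with this choice all of the auxiliary data (the $\ad h'$-grading, the character $\chi'$, the subspaces $\m'$ and $\n'$, and an isotropic $\l'$ of $\g'(-1)$) lie inside $[\g',\g']$. Combining this with $\g' = \z(\g') \oplus [\g',\g']$ and $U(\g') = U(\z(\g')) \otimes U([\g',\g'])$ yields an algebra isomorphism
\[
U(\g',e') \;\iso\; S(\z(\g')) \otimes U([\g',\g'],e'),
\]
and passing to abelianizations gives $U(\g',e')^{\ab} \iso S(\z(\g')) \otimes U([\g',\g'],e')^{\ab}$. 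Taking maximal spectra produces an isomorphism $\cE' \iso \z(\g')^* \times \cE'_{\mathrm{ss}}$, where $\cE'_{\mathrm{ss}}$ is the corresponding variety for $([\g',\g'],e')$. Thus every irreducible component of $\cE'$ has the form $\z(\g')^* \times C$ for some irreducible component $C$ of $\cE'_{\mathrm{ss}}$, and therefore has dimension at least $\dim \z(\g') = \operatorname{rank}\g - \operatorname{ssrank}\g'$.

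The main obstacle is verifying the tensor product decomposition of $U(\g',e')$. Although this is conceptually natural because $\z(\g')$ commutes with everything used in the construction, it requires compatible choices of all the data in the PBW presentation from Section~\ref{S:background}, together with the observation that taking $\n'$-invariants commutes with tensoring over the trivially-acted-on factor $U(\z(\g'))$. Once this is in hand, the remainder of the argument is essentially immediate from the finiteness of $\rho_\q^\g$ recalled above.
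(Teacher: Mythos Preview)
Your proof is correct and follows essentially the same approach as the paper. Both arguments rest on the tensor decomposition $U(\g',e') \iso S(\z(\g')) \otimes U([\g',\g'],e')$ together with the finiteness of the morphism $\cE' \to \cE$ induced by $\rho_\q^\g$; the only cosmetic difference is that the paper constructs explicitly the $\z(\g')^*$-family $\{M'_\zeta\}$ through $M'$ and pushes that forward, whereas you phrase this as the global product decomposition $\cE' \iso \z(\g')^* \times \cE'_{\mathrm{ss}}$ and take the full component through $M'$.
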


\begin{proof}
We have $\g' = [\g',\g']+ \z(\g')$, where $\z(\g')$ denotes the centre of $\g'$.  From the definition of $U(\g',e')$
it is straightforward to see that $U(\g',e') \iso U([\g',\g'],e') \otimes S(\z(\g'))$. Let $\sigma' : U(\g',e) \to \C$ be the
representation of $U(\g',e)$ corresponding to $M'$.  Given any character $\zeta : S(\z(\g')) \to \C$
we let $\sigma'_\zeta : U(\g',e) \to \C$ be the 1-dimensional representation with
$\sigma'_\zeta(u \otimes z) := \sigma'(u \otimes z)\zeta(z)$, and let $M'_\zeta$ be the corresponding 1-dimensional $U(\g',e)$-module.
We identify each $M'_\zeta$ for $\zeta$ a character of $S(\z(\g'))$ with a point of $\cE'$.
The closure of $\{M'_\zeta \mid \zeta$ is a character of $S(\z(\g'))\}$ is an irreducible subvariety of $\cE'$ of dimension
$\operatorname{rank} \g - \operatorname{ssrank} \g'$.  Since the parabolic induction functor gives a finite morphism $\cE' \to \cE$, the image
of this irreducible subvariety in $\cE$ has the same dimension.  The closure of this image is also irreducible and contains the point corresponding to $M$.
Thus $M$ lies in an irreducible component of $\cE$ of dimension at least $\dim \z(\g') = \operatorname{rank} \g - \operatorname{ssrank} \g'$.
\end{proof}

Before moving on to prove Theorem~\ref{T:noninduced}, we need to recall
Losev's map of ideals
and
how Losev's
parabolic induction functor intertwines with the induction of ideals.

We write $\cdot^\dagger : \operatorname{Id}(U(\g,e)) \to
\operatorname{Id}(U(\g))$ for Losev's map from (two-sided) ideals of $U(\g,e)$ to (two-sided) ideals of
$U(\g)$, see \cite[Theorem~1.2.2]{LoQS}.  By parts (v) and (vi) of that theorem, the restriction of
$\cdot^\dagger$ to the set of ideals of $U(\g,e)$ of finite codimension
maps into the set of ideals of $U(\g)$ with associated variety equal
to $\overline{\cO}$.  Further by  \cite[Theorem~1.2.2(viii)]{LoQS}, the restriction of
$\cdot^\dagger$ to the set of primitive ideals of $U(\g,e)$ of finite codimension
maps surjectively onto the set of primitive ideals of $U(\g)$ with associated variety equal
to $\overline{\cO}$, and by \cite[Conjecture~1.2.1]{LoFD} (which is
deduced from \cite[Theorem~1.2.2]{LoFD}) the fibres are $\Gamma$-orbits.

We recall the definition of parabolic induction of from ideals
of $U(\g')$ to ideals of $U(\g)$.
Given a ideal $I'$ of $U(\g')$ we let $\cI_\q^\g(I')$
be the largest two-sided ideal of $U(\g)$ contained in the left ideal $U(\g)(\u + I')$.

Let $M'$ be a finite dimensional $U(\g',e')$-module.  Then $\Ann_{U(\g',e')}(M')^\dagger$ is an
ideal of $U(\g')$ with associated variety $\overline{\cO'}$, so that $\cI_\q^\g(\Ann_{U(\g',e')}(M')^\dagger)$
is an ideal of $U(\g)$; we note that by a minor abuse of notation
we also write $\cdot^\dagger$ for the map from ideals of $U(\g',e')$ to ideals of $U(\g')$.
Also we have that $\rho_\q^\g(M') \in U(\g,e)\modfd$, so that
$\Ann_{U(\g,e)}(\rho_\q^\g(M'))^\dagger$ is a ideal of $U(\g)$ with associated variety $\overline{\cO}$.
By \cite[Corollary~6.4.2]{LoPI} there is an equality
$\cI_\q^\g(\Ann_{U(\g',e')}(M')^\dagger)=\Ann_{U(\g,e)}(\rho_\q^\g(M'))^\dagger$.
We illustrate the discussion above in the diagram below.
\begin{center}
\begin{tikzcd}
M' \arrow[r, maps to] \arrow[d, maps to]
& \rho_\q^\g(M')  \arrow[d, maps to] \\
\Ann_{U(\g',e')}(M')^\dagger \arrow[r, maps to]
 & \cI_\q^\g(\Ann_{U(\g',e')}(M')^\dagger) = \Ann_{U(\g,e)}(\rho_\q^\g(M'))^\dagger.
\end{tikzcd}
\end{center}

We make a useful observation about inducing primitive ideals.  Let $I'$ be a primitive ideal of $U(\g')$ with
associated variety $\overline {\cO'}$.
Then we can find an irreducible module $M' \in U(\g',e')\modfd$ such that $\Ann_{U(\g',e')}(M')^\dagger = I'$.
Therefore, $\cI_\q^\g(I') = \Ann_{U(\g,e)}(\rho_\q^\g(M))^\dagger$ and, in particular, it has associated variety $\overline \cO$.

We are now in a position to prove Theorem~\ref{T:noninduced}.  For the proof we no
longer consider $\g'$ to be a fixed Levi subalgebra of $\g$.

\begin{proof}[Proof of Theorem~\ref{T:noninduced}]
Let $M$ be the 1-dimensional $U(\g,e)$-module corresponding to the isolated point in $\cE^\Gamma$;
we note that this point is also an isolated point of $\cE$.  Let $I = \Ann_{U(\g,e)}(M)^\dagger$.
Then $I$ is
a multiplicity free primitive ideal of $U(\g)$ with associated variety $\overline{\cO}$

Suppose that $I$ is obtained from a primitive ideal $I'$ of $U(\g')$ by parabolic induction for some Levi
subalgebra $\g'$ of $\g$ contained in the parabolic subalgebra $\q = \g' \oplus \u$.
By the observation before this proof we see that the associated variety of $I'$
must be $\overline{\cO'}$ for some nilpotent orbit $\cO'$ in $\g'$ such that $\cO$ is obtained
from $\cO'$ by Lusztig--Spaltenstein induction.  Let $e' \in \cO'$.
Since $I'$ is primitive and has associated variety $\overline{\cO'}$, there
is a primitive ideal $J'$ of $U(\g',e')$ with finite codimension
such that $(J')^\dagger = I'$, and thus
there exists a (finite dimensional)
$U(\g',e')$-module $M'$ with $\Ann_{U(\g',e')}(M')^\dagger = I'$.
We deduce that
$$
\Ann_{U(\g,e)}(M)^\dagger = I = \cI_\q^\g(I') = \cI_\q^\g(\Ann_{U(\g',e')}(M')^\dagger) = \Ann_{U(\g,e)}(\rho_\q^\g(M'))^\dagger,
$$
where \cite[Corollary~6.4.2]{LoPI} is applied for the last equality.
In particular, this implies that $\Ann_{U(\g,e)}(M)$ and $\Ann_{U(\g,e)}(\rho_\q^\g(M'))$
are in the same $\Gamma$-orbit by \cite[Conjecture~1.2.1]{LoFD}.  Since $M$ corresponds to a point
in $\cE^\Gamma$, we deduce that $\Ann_{U(\g,e)}(M) = \Ann_{U(\g,e)}(\rho_\q^\g(M'))$, so that $M \iso \rho_\q^\g(M')$.
This implies that $M'$ is a 1-dimensional $U(\g',e')$-module, and thus we obtain a contradiction
by Proposition~\ref{P:noninduced}.  Hence, we deduce that $I$ is not induced from a primitive
ideal of $U(\g')$ for any Levi subalgebra $\g'$ of $\g$.
\end{proof}

\section{Premet's map of irreducible components}  \label{S:irrcomp}

We need to give some notation to allow us to recall \cite[Theorem~1.2]{PrCQ}.
Let $\cS_1,\dots,\cS_t$ denote the sheets of $\g$ containing $e$.
Fix an $\sl_2$-triple $(e,h,f)$ in $\g$,  write $\g^f$ for
the centralizer of $f$ in $\g$, and let $e + \g^f$ be the Slodowy slice to
the nilpotent orbit of $e$.  For $i=1, \dots, t$ we write $\cX_i := \cS_i \cap (e+\g^f)$.  For
a variety $\cX$ we write $\Comp(\cX)$ for the set of irreducible components of $\cX$.

Premet proved in \cite[Theorem~1.2]{PrCQ} that there is a surjection
$$
\tau : \Comp(\cE) \twoheadrightarrow \Comp(\cX_1) \sqcup \dots \sqcup \Comp(\cX_t)
$$
such that for any $Y \in \Comp(\cE)$ we have $\dim Y \le \dim \tau(Y)$,
and this bound on dimension is attained in each fibre of $\tau$.
We note that there is an action of $\Gamma$ on both $\Comp(\cE)$ and $\Comp(\cX_i)$ for each $i=1,\dots,t$.
One can check from the construction of $\tau$ in \cite[Section~3]{PrCQ} that it is $\Gamma$-equivariant; we note that
a slight modification is needed to the approach given in \cite{PrCQ} to work with the
definition of $U(\g,e)$ with the choice of isotropic space $\l = 0$, so that the action
of $\Gamma$ on $\cE$ can be seen.  We also recall that, by Katsylo's theorem from \cite{K},
the action of $\Gamma$ on $\Comp(\cX_i)$ is transitive.

Following the terminology of Losev in \cite[\S5.4]{LoOM} we say that $Y \in \Comp(\cE)$ is {\em large}
if $\dim Y = \dim \tau(Y)$.  It is conjectured in {\em loc.\ cit.\ }that all components
of $\cE$ are large for $\g$ of classical type, and also stated that if all components of $\cE$
are large, then $\tau$ is actually a bijection.

In the cases that we have calculated one can
verify that all irreducible components of $\cE$ are large except in the cases $(\rF_4,\rC_3(a_1))$
and $(\rE_6,\rA_3+\rA_1)$.  This is done by verifying that
\begin{itemize}
\item the number of $\Gamma$-orbits on $\Comp(\cE)$ equals the number of sheets of $\g$ containing $e$, and
\item the dimensions of components of $\cE$ in each $\Gamma$-orbit match up with the dimensions of the $\cS_i \cap (e+\g^f)$ for $i=1,\dots, t$.
\end{itemize}
Our results along with \cite[Theorems 1 and 4]{PrT}
verify the conjecture of Losev holds for classical Lie algebras with rank at most 4.  We emphasise that in
the cases $(\rF_4,\rC_3(a_1))$ and $(\rE_6,\rA_3+\rA_1)$ our calculations show that there are
non-large components of $\cE$.


\begin{thebibliography}{GAP}

\bibitem[Br]{Bro} J.~Brown,
{\em Representation theory of rectangular finite $W$-algebras},
J. Algebra, {\bf 340} (2011), 114--150.

\bibitem[BG]{BG}
J.~Brundan and S.~M.~Goodwin, {\em Good grading polytopes}, Proc.\
London Math.\ Soc.\ {\bf 94} (2007), 155--180.

\bibitem[EK]{EK}
A.~G.~Elashvili and V.~G.~Kac, {\em Classification of Good Gradings of Simple Lie Algebras},
Lie groups and invariant theory\, (E.~B.~Vinberg
ed.), pp.\ 85--104, Amer.\ Math.\ Soc.\ Transl.\ {\bf 213}, AMS,
2005.

\bibitem[GG]{GG}
W.~L.~Gan and V.~Ginzburg, {\em Quantization of Slodowy slices},
Internat.\ Math.\ Res.\ Notices {\bf 5} (2002), 243--255.

\bibitem[GAP]{GAP}
The GAP~Group,
{\em GAP -- Groups, Algorithms, and Programming, Version 4.8.7}, 2017.
\\ \href{http://www.gap-system.org}{http://www.gap-system.org}.

\bibitem[GRU]{GRU} S.~M.~Goodwin, G.~R\"ohrle and G.~Ubly, {\em On 1-dimensional representations of finite W-algebras
associated to simple Lie algebras of exceptional type}, LMS J.\ Comput.\ Math.\ {\bf 13} (2010) 357--369.

\bibitem[Ka]{K}
P.~I.~Katsylo,
{\em Sections  of  sheets  in  a  reductive  algebraic  Lie  algebra},
Math. USSR Izvestyia {\bf 20} (1983), 449--458.

\bibitem[LT]{LT} R.~Lawther and D.~M.~Testerman,
{\em Centres of centralizers of unipotent elements in simple algebraic groups},
Mem.\ Amer.\ Math.\ Soc.\ {\bf 210} (2011).

\bibitem[Lo1]{LoQS} I.~Losev,
{\em Quantized symplectic actions and W-algebras}, J.\ Amer.\ Math.\ Soc.\ {\bf 23} (2010), 35--59.

\bibitem[Lo2]{Losurv}
\bysame, 
{\em Finite $W$-algebras},
in: Proceedings of the International Congress of Mathematicicans,
vol.\ III, pp.\ 1281--1307, Hindustan Book Agency, New Delhi, 2010.

\bibitem[Lo3]{LoFD}
\bysame, 
{\em Finite dimensional representations of W-algebras}, Duke Math.\ J.\ {\bf 159} (2011), 99--143.

\bibitem[Lo4]{LoPI}
\bysame, 
{\em $1$-dimensional representations and parabolic induction for $W$-algebras}, Adv.\ Math.\
{\bf 226} (2011), 4841--4883.

\bibitem[Lo5]{LoQO}
\bysame,
{\em Quantizations of regular functions on nilpotent orbits},  {\tt arxiv:1505.08048} (2015).

\bibitem[Lo6]{LoOM}
\bysame, 
{\em Deformations of symplectic singularities and orbit method for semisimple Lie algebras},  {\tt arxiv:1605.00592} (2016).

\bibitem[M{\oe}]{Moe}
C.~M{\oe}glin, {\em Id\'eaux compl\`etement premiers de l'alg\`ebre enveloppante de $\mathfrak{gl}_n(\C)$}, J.\
Algebra {\bf 106} (1987), 287--366.

\bibitem[Pr1]{PrST}
A.~Premet, {\em Special transverse slices and their enveloping
algebras}, Adv.\ Math.\ {\bf 170} (2002), 1--55.

\bibitem[Pr2]{PrEA}
\bysame,
{\em Enveloping algebras of Slodowy slices and the Joseph ideal},
J.\ Eur.\ Math.\ Soc.\ {\bf 9} (2007), 487--543.

\bibitem[Pr3]{PrCQ}
\bysame,
{\em Commutative quotients of finite $W$-algebras}, Adv.\ Math.\ {\bf 225} (2010), 269--306.

\bibitem[Pr4]{PrMF}
\bysame, 
{\em Multiplicity-free primitive ideals associated with rigid nilpotent orbits},
Transform.\ Groups
{\bf 19} (2014), 569--641.

\bibitem[PT]{PrT}
A.~Premet and L.~Topley,
{\em Derived subalgebras of centralisers and finite $W$-algebras},
Compos.\ Math.\ {\bf 150} (2014), 1485--1548.


\bibitem[Sk]{Sk} S.~Skryabin, A category equivalence,
appendix to \cite{PrST}.

\bibitem[Ub]{Ub}
G. Ubly, {\em A computational approach to 1-dimensional representations of finite $W$-algebras associated with simple Lie algebras of exceptional type},
PhD Thesis, University of Southampton, ePrints, Soton, 2010, 177 pages; {\tt http://eprints.soton.ac.uk}.




\end{thebibliography}
\end{document}